\newtheorem{theorem}{Theorem}
\theoremstyle{plain}
\newtheorem{acknowledgement}{Acknowledgement}
\newtheorem{definition}{Definition}
\numberwithin{equation}{section}
\begin{document}
\author{}
\title{}
\maketitle

\begin{center}
\thispagestyle{empty} \pagestyle{myheadings} 
\markboth{\bf Burak Kurt}{\bf
Identities and Relations On The Hermite-based Tangent Polynomials}

\textbf{\Large Identities And Relations On The Hermite-based Tangent
Polynomials}

\bigskip

\textbf{Burak Kurt} \bigskip

\medskip

Department of Mathematics, Faculty of Educations\\[0pt]

University of Akdeniz\\[0pt]

TR-07058 Antalya, Turkey \\[0pt]

E-mail\textbf{: burakkurt@akdeniz.edu.tr}

\medskip

\bigskip

\bigskip

\textbf{{\large {Abstract}}}\medskip
\end{center}

\begin{quotation}
In this note, we introduce and investigate the Hermite-based Tangent numbers
and polynomials, Hermite-based modified degenerate-Tangent polynomials,
poly-Tangent polynomials. We give some identities and relations for these
polynomials.
\end{quotation}

\noindent \textbf{2010 Mathematics Subject Classification.} 11B75, 11B68,
11B83, 33E30, 33F99.

\noindent \textbf{Key Words and Phrases. }Bernoulli polynomials and numbers,
Genocchi polynomials and numbers, Hermite polynomials, Stirling numbers of
the second kind, Tangent polynomials and numbers, Hermite-based Bernoulli
polynomials, Hermite-based Tangent polynomials, polylogaithm function,
Poly-Bernoulli polynomials, Degenerate Bernoulli and Genocchi polynomials.

\section{Introduction and Notation}

Many mathematicians have studied in the area of the Bernoulli numbers and
polynomials, Euler number and polynomials, Genocchi numbers and polynomials,
poly-Bernoulli numbers and polynomials, poly-Euler numbers and polynomials,
poly-Genocchi numbers and polynomials, poly-Tangent numbers and polynomials,
Hermite polynomials, Hermite-based Bernoulli polynomials, Hermite-based
Tangent polynomials, modified degenerate Bernoulli polynomials, modified
degenerate Euler polynomials and modified degenerate Genocchi polynomials
(see \cite{1}-\cite{20}). In this note we define the Hermite-based tangent
polynomials, modified Hermite-based tangent polynomials and poly-tangent
polynomials. We obtain some relations and identities for these polynomials.
Throughout this paper, we always make use of the following notations: $%
\mathbb{N}
$ denotes the set of natural numbers and $%
\mathbb{Z}
_{+}=%
\mathbb{N}
\cup \left\{ 0\right\} $. We recall that the classical Stirling numbers of
the first kind $S_{1}(n,k)$ and second kind $S_{2}(n,k)$ are defined by the
relations \cite{15}%
\begin{equation}
\left( x\right) _{n}=\sum_{k=0}^{n}S_{1}(n,k)x^{k}\text{ \ \ and \ \ }%
x^{n}=\sum_{k=0}^{n}S_{2}(n,k)\left( x\right) _{k}\text{\ }  \label{1}
\end{equation}%
respectively. Here $\left( x\right) _{n}=x(x-1)\cdots (x-n+1)$ denotes the
falling factorial polynomials of order $n$. The numbers $S_{2}(n,m)$ also
admit a representation in terms of a generating function%
\begin{equation}
\frac{\left( e^{t}-1\right) ^{m}}{m!}=\sum_{n=m}^{\infty }S_{2}(n,m)\frac{%
t^{n}}{n!}\text{.}  \label{2}
\end{equation}

The Bernoulli polynomials $B_{n}^{\left( r\right) }(x)$ of order $\alpha $,
the Euler polynomials $E_{n}^{\left( r\right) }(x;\lambda )$ of order $%
\alpha $ and the Genocchi polynomials $G_{n}^{\left( r\right) }(x;\lambda )$
of order $\alpha $ are defined as respectively:%
\begin{equation}
\left( \frac{t}{e^{t}-1}\right) ^{r}e^{xt}=\sum\limits_{n=0}^{\infty
}B_{n}^{\left( r\right) }(x)\frac{t^{n}}{n!}\text{,}\text{ }\left\vert
t\right\vert <2\pi \text{,}  \label{3}
\end{equation}%
\begin{equation}
\left( \frac{2}{e^{t}+1}\right) ^{r}e^{xt}=\sum\limits_{n=0}^{\infty
}E_{n}^{\left( r\right) }(x)\frac{t^{n}}{n!}\text{, }\left\vert t\right\vert
<\pi  \label{4}
\end{equation}%
and%
\begin{equation}
\left( \frac{2t}{e^{t}+1}\right) ^{r}e^{xt}=\sum\limits_{n=0}^{\infty
}G_{n}^{\left( r\right) }(x)\frac{t^{n}}{n!}\text{, }\left\vert t\right\vert
<\pi \text{.}  \label{5}
\end{equation}%
When $x=0$, $B_{n}^{\left( r\right) }(0)=B_{n}^{\left( r\right) }$, $%
E_{n}^{\left( r\right) }(0)=E_{n}^{\left( r\right) }$ and $G_{n}^{\left(
r\right) }(0)=G_{n}^{\left( r\right) }$ are called Bernoulli numbers of
order $r$, Euler numbers of order $r$ and Genocchi numbers of order $r$,
respectively.

The familiar tangent polynomials $T_{n}^{\left( r\right) }\left( x\right) $
of order $r$ are defined by the generating functions (\cite{12}-\cite{15}, 
\cite{17})%
\begin{equation}
\left( \frac{2}{e^{2t}+1}\right) ^{r}e^{xt}=\sum\limits_{n=0}^{\infty
}T_{n}^{\left( r\right) }\left( x\right) \frac{t^{n}}{n!}\text{, }\left\vert
2t\right\vert <\pi \text{.}  \label{6}
\end{equation}%
When $x=0$, $T_{n}^{\left( r\right) }\left( 0\right) =T_{n}^{\left( r\right)
}$ are called the tangent numbers.

$2$-variable Hermite-Kamp\'{e}de F\'{e}riet polynomials are defined in (\cite%
{5}, \cite{11}) as%
\begin{equation}
\sum\limits_{n=0}^{\infty }H_{n}\left( x,y\right) \frac{t^{n}}{n!}%
=e^{xt+yt^{2}}\text{.}  \label{7}
\end{equation}

Khan \textit{et al.} in \cite{5} defined and studied on Hermite-based
Bernoulli polynomials and Hermite-based Euler polynomials as%
\begin{equation}
\sum\limits_{n=0}^{\infty }\left( _{H}\mathcal{B}_{n}(x,y)\right) \frac{t^{n}%
}{n!}=\frac{t}{e^{t}-1}e^{xt+yt^{2}}\text{,}\text{ }\left\vert t\right\vert
<2\pi  \label{8}
\end{equation}%
and%
\begin{equation}
\sum\limits_{n=0}^{\infty }\left( _{H}\mathcal{E}_{n}(x,y)\right) \frac{t^{n}%
}{n!}=\frac{2}{e^{t}+1}e^{xt+yt^{2}}\text{,}\text{ }\left\vert t\right\vert
<\pi \text{,}  \label{9}
\end{equation}%
respectively.

Carlitz in \cite{1} defined degenerate Bernoulli polynomials which are given
by the generating functions to be%
\begin{equation}
\frac{t}{\left( 1+\lambda t\right) ^{1/\lambda }-1}\left( 1+\lambda t\right)
^{x/\lambda }=\sum\limits_{n=0}^{\infty }\mathfrak{B}_{n}(x\mid \lambda )%
\frac{t^{n}}{n!}\text{.}  \label{10}
\end{equation}

When $x=0$, $\mathfrak{B}_{n}(\lambda )=\mathfrak{B}_{n}(0\mid \lambda )$
are called the degenerate Bernoulli numbers.

From (\ref{10}), we can easily derive the following equation%
\begin{equation*}
\mathfrak{B}_{n}(x\mid \lambda )=\sum\limits_{l=0}^{n}\binom{n}{l}\mathfrak{B%
}_{n-l}(\lambda )\left( x\mid \lambda \right) _{l}\text{, }n\geq 0
\end{equation*}%
where $\left( x\mid \lambda \right) _{n}=x\left( x-\lambda \right) \cdots
\left( x-\lambda \left( n-1\right) \right) $, $\left( x\mid \lambda \right)
_{n}=1$.

Dolgy \textit{et. al.} \cite{2} defined the modified degenerate Bernoulli
polynomials, which are different from Carlitz's degenerate Bernoulli
polynomials as%
\begin{equation}
\frac{t}{\left( 1+\lambda \right) ^{t/\lambda }-1}\left( 1+\lambda \right)
^{xt/\lambda }=\sum\limits_{n=0}^{\infty }\mathfrak{B}_{n,\lambda }(x)\frac{%
t^{n}}{n!}\text{.}  \label{11}
\end{equation}%
When $x=0$, $\mathfrak{B}_{n,\lambda }=\mathfrak{B}_{n,\lambda }(0)$ are
called the modified degenerate Bernoulli numbers. From (\ref{11}), we note
that%
\begin{eqnarray}
\underset{\lambda \rightarrow 0}{\lim }\sum\limits_{n=0}^{\infty }\mathfrak{B%
}_{n,\lambda }(x)\frac{t^{n}}{n!} &=&\underset{\lambda \rightarrow 0}{\lim }%
\frac{t}{\left( 1+\lambda \right) ^{t/\lambda }-1}\left( 1+\lambda \right)
^{xt/\lambda }  \notag \\
&=&\frac{t}{e^{t}-1}e^{xt}=\sum\limits_{n=0}^{\infty }B_{n}(x)\frac{t^{n}}{n!%
}\text{.}  \label{12}
\end{eqnarray}

Thus, by (\ref{12})%
\begin{equation*}
\underset{\lambda \rightarrow 0}{\lim }\mathfrak{B}_{n,\lambda }(x)=B_{n}(x)%
\text{.}
\end{equation*}

H.-In Known \textit{et. al.} \cite{8} defined the modified degenerate Euler
polynomials as%
\begin{equation}
\frac{2}{\left( 1+\lambda \right) ^{t/\lambda }+1}\left( 1+\lambda \right)
^{xt/\lambda }=\sum\limits_{n=0}^{\infty }\mathfrak{E}_{n,\lambda }(x)\frac{%
t^{n}}{n!}  \label{13}
\end{equation}%
and T. Kim \textit{et. al.} in \cite{6} defined the modified degenerate
Genocchi polynomials as%
\begin{equation}
\frac{2t}{\left( 1+\lambda \right) ^{t/\lambda }+1}\left( 1+\lambda \right)
^{tx/\lambda }=\sum\limits_{n=0}^{\infty }\mathfrak{G}_{n,\lambda }(x)\frac{%
t^{n}}{n!}\text{.}  \label{14}
\end{equation}%
From (\ref{13}) and (\ref{14}), we get%
\begin{equation*}
\underset{\lambda \rightarrow 0}{\lim }\mathfrak{E}_{n,\lambda }(x)=E_{n}(x)%
\text{, }\underset{\lambda \rightarrow 0}{\lim }\mathfrak{G}_{n,\lambda
}(x)=G_{n}(x)\text{.}
\end{equation*}%
For $k\in 
\mathbb{Z}
$, $k>1$, then $k$-th polylogarithm is defined by Kaneko \cite{4} as%
\begin{equation}
L_{i_{k}}(z)=\sum\limits_{n=1}^{\infty }\frac{z^{n}}{n^{k}}\text{.}
\label{15}
\end{equation}%
Thus function is convergent for $\left\vert z\right\vert <1$, when $k=1$%
\begin{equation}
L_{i_{1}}(z)=-\log (1-z)\text{.}  \label{16}
\end{equation}

Kim \textit{et. al.} in \cite{7} defined the poly-Bernoulli polynomials and
the poly-Genocchi polynomials as%
\begin{equation}
\sum\limits_{n=0}^{\infty }\mathfrak{B}_{n}^{\left( k\right) }(x)\frac{t^{n}%
}{n!}=\frac{L_{i_{k}}(1-e^{-t})}{1-e^{-t}}e^{xt}  \label{17}
\end{equation}%
and%
\begin{equation}
\sum\limits_{n=0}^{\infty }\mathfrak{G}_{n}^{\left( k\right) }(x)\frac{t^{n}%
}{n!}=\frac{2L_{i_{k}}(1-e^{-t})}{e^{t}+1}e^{xt}  \label{18}
\end{equation}%
respectively.

For $k=1$, by use (\ref{16}) in (\ref{17}) and (\ref{18}). We get%
\begin{equation*}
\mathfrak{B}_{n}^{\left( 1\right) }(x)=(-1)^{n+1}B_{n}(x)\text{, }\mathfrak{G%
}_{n}^{\left( 1\right) }(x)=G_{n}(x)\text{.}
\end{equation*}

Hamahata \cite{3} defined poly-Euler polynomials by%
\begin{equation*}
\sum\limits_{n=0}^{\infty }\mathfrak{E}_{n}^{\left( k\right) }(x)\frac{t^{n}%
}{n!}=\frac{2L_{i_{k}}(1-e^{-t})}{t\left( e^{t}+1\right) }e^{xt}\text{.}
\end{equation*}%
For $k=1$, we get $\mathfrak{E}_{n}^{\left( 1\right) }(x)=E_{n}(x)$.

From (\ref{6}), we obtain the following equalities easily%
\begin{equation*}
T_{n}^{\left( r\right) }\left( x\right) =\sum\limits_{k=0}^{n}\binom{n}{k}%
T_{k}^{\left( r\right) }x^{n-k}\text{,}
\end{equation*}%
\begin{eqnarray*}
T_{n}^{\left( r\right) }\left( x+y\right) &=&\sum\limits_{l=0}^{k}\binom{k}{l%
}T_{k}^{\left( r\right) }(x)y^{k-l}\text{,} \\
T_{n}^{\left( r_{1}+r_{2}\right) }\left( x+y\right) &=&\sum\limits_{k=0}^{n}%
\binom{n}{k}T_{k}^{\left( r_{1}\right) }(x)T_{n-k}^{\left( r_{2}\right) }(y)
\end{eqnarray*}%
and%
\begin{equation*}
T_{n}^{\left( r\right) }\left( 2\left( x+1\right) \right) =2T_{n}^{\left(
r-1\right) }\left( 2x\right) \text{.}
\end{equation*}

\section{Hermite Based Tangent Polynomials}

Khan \textit{et. al.} in \cite{5} and Ozarslan \cite{11} introduced and
investigated the Hermite-based Bernoulli polynomials and Hermite-based Euler
polynomials. They proved some identities and relations for these polynomials.

By this motivation, we define Hermite-based Tangent polynomials of order $r$
as%
\begin{equation}
\sum\limits_{n=0}^{\infty }\left( _{H}T_{n}^{\left( r\right) }\left(
x,y\right) \right) \frac{t^{n}}{n!}=\left( \frac{2}{e^{2t}+1}\right)
^{r}e^{xt+yt^{2}}\text{.}  \label{19}
\end{equation}

\begin{theorem}
Let $r_{1}$, $r_{2}\in 
\mathbb{Z}
_{+}$. We have%
\begin{eqnarray*}
_{H}T_{n}^{\left( r\right) }\left( x,y\right) &=&\sum\limits_{k=0}^{n}\binom{%
n}{k}T_{k}^{\left( r\right) }\left( 0,0\right) H_{n-k}(x,y)\text{,} \\
_{H}T_{n}^{\left( r\right) }\left( x+u,y+v\right) &=&\sum\limits_{k=0}^{n}%
\binom{n}{k}\left( _{H}T_{k}^{\left( r\right) }\left( x,y\right) \right)
H_{n-k}(u,v)
\end{eqnarray*}%
and%
\begin{equation*}
_{H}T_{n}^{\left( r_{1}+r_{2}\right) }\left( x+u,y+v\right)
=\sum\limits_{k=0}^{n}\binom{n}{k}\left( _{H}T_{k}^{\left( r_{1}\right)
}\left( x,y\right) \right) \left( _{H}T_{n-k}^{\left( r_{2}\right) }\left(
u,v\right) \right) \text{.}
\end{equation*}
\end{theorem}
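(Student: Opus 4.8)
The plan is to prove all three identities by the same mechanism: factor the defining generating function (\ref{19}) into a product of two familiar exponential generating series and then read off coefficients using the Cauchy product rule. The only tool I need is the elementary identity that if $\sum_{k}a_{k}\,t^{k}/k!$ and $\sum_{m}b_{m}\,t^{m}/m!$ are two exponential generating functions, then their product equals $\sum_{n}\left(\sum_{k=0}^{n}\binom{n}{k}a_{k}b_{n-k}\right)t^{n}/n!$. Each identity then amounts to choosing the right factorization and comparing the coefficient of $t^{n}/n!$ on both sides.

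For the first identity I would write the right-hand side of (\ref{19}) as the product $\left(\frac{2}{e^{2t}+1}\right)^{r}\cdot e^{xt+yt^{2}}$. Setting $x=y=0$ in (\ref{19}) shows that the first factor is $\sum_{k}T_{k}^{(r)}(0,0)\,t^{k}/k!$, i.e. the generating function of the tangent numbers, while the second factor is the generating function (\ref{7}) of the Hermite--Kamp\'{e} de F\'{e}riet polynomials $H_{m}(x,y)$. Applying the Cauchy product and matching the coefficient of $t^{n}/n!$ yields ${}_{H}T_{n}^{(r)}(x,y)=\sum_{k=0}^{n}\binom{n}{k}T_{k}^{(r)}(0,0)H_{n-k}(x,y)$.

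For the second identity I would split the exponential as $e^{(x+u)t+(y+v)t^{2}}=e^{xt+yt^{2}}\cdot e^{ut+vt^{2}}$, so that the generating function of ${}_{H}T_{n}^{(r)}(x+u,y+v)$ factors as $\left[\left(\frac{2}{e^{2t}+1}\right)^{r}e^{xt+yt^{2}}\right]\cdot e^{ut+vt^{2}}$. The bracketed factor is exactly the generating function of ${}_{H}T_{k}^{(r)}(x,y)$ from (\ref{19}), and the remaining exponential supplies $H_{n-k}(u,v)$ via (\ref{7}); the Cauchy product again gives the stated formula. The third identity follows the identical pattern, writing $\left(\frac{2}{e^{2t}+1}\right)^{r_{1}+r_{2}}e^{(x+u)t+(y+v)t^{2}}$ as the product of $\left(\frac{2}{e^{2t}+1}\right)^{r_{1}}e^{xt+yt^{2}}$ and $\left(\frac{2}{e^{2t}+1}\right)^{r_{2}}e^{ut+vt^{2}}$, each a copy of (\ref{19}) generating ${}_{H}T_{k}^{(r_{1})}(x,y)$ and ${}_{H}T_{n-k}^{(r_{2})}(u,v)$ respectively, then invoking the Cauchy product.

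I do not expect any genuine obstacle here, since all three statements are coefficient comparisons after an obvious factorization of (\ref{19}); the analytic validity is guaranteed by the convergence conditions $|2t|<\pi$ already attached to (\ref{6}). The only point that requires a little care is the bookkeeping of the binomial coefficients in the Cauchy product, which I would confirm by equating the $t^{n}/n!$ coefficients on both sides in each case.
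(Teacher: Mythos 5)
Your proposal is correct: all three identities follow exactly as you describe, by factoring the generating function (\ref{19}) as $\left(\frac{2}{e^{2t}+1}\right)^{r}\cdot e^{xt+yt^{2}}$, as $\left[\left(\frac{2}{e^{2t}+1}\right)^{r}e^{xt+yt^{2}}\right]\cdot e^{ut+vt^{2}}$, and as the product of the $r_{1}$- and $r_{2}$-copies of (\ref{19}), then applying the Cauchy product and comparing coefficients of $t^{n}/n!$. The paper states this theorem without proof, treating it as immediate from the definition, and your argument is precisely the standard generating-function mechanism the paper itself uses in its proofs of the later theorems (e.g.\ those for (\ref{20}) and (\ref{21})), so there is nothing to add beyond the remark that these are formal power series identities and the convergence condition $\left\vert 2t\right\vert <\pi$ is not actually needed.
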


\begin{theorem}
Let $r\in 
\mathbb{Z}
_{+}$.%
\begin{equation*}
_{H}T_{n}^{\left( r\right) }\left( 2\left( x+u\right) ,2\left( y+v\right)
\right) =\sum\limits_{m=0}^{n}\binom{n}{m}\left( _{H}T_{n-m}^{\left(
r\right) }\left( x,y\right) \right) \sum\limits_{p=0}^{m}\binom{m}{p}%
H_{p}(x,y)H_{m-p}(x,y)\text{.}
\end{equation*}
\end{theorem}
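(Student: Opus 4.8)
The plan is to establish the identity by comparing coefficients in the defining exponential generating functions, exactly as in the first theorem of this section. By (\ref{19}) the left-hand side is the coefficient of $t^{n}/n!$ in
\begin{equation*}
\left( \frac{2}{e^{2t}+1}\right) ^{r}e^{2(x+u)t+2(y+v)t^{2}},
\end{equation*}
so the first step is to split the exponent $2(x+u)t+2(y+v)t^{2}$ into a sum for which the exponential factors into pieces that are themselves generating functions already introduced in the excerpt, namely a Tangent factor $\bigl(2/(e^{2t}+1)\bigr)^{r}e^{\,\cdot\, t+\,\cdot\, t^{2}}$ of the form (\ref{19}) and purely Hermite factors $e^{\,\cdot\, t+\,\cdot\, t^{2}}$ of the form (\ref{7}).

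Next I would read the right-hand side as an iterated Cauchy product. The inner sum $\sum_{p=0}^{m}\binom{m}{p}H_{p}(x,y)H_{m-p}(x,y)$ is, by the Cauchy-product rule applied to two copies of (\ref{7}), precisely the coefficient of $t^{m}/m!$ in $\bigl(e^{xt+yt^{2}}\bigr)^{2}=e^{2xt+2yt^{2}}$; hence it collapses to a single Hermite value through the addition formula $H_{m}(a+c,b+d)=\sum_{p=0}^{m}\binom{m}{p}H_{p}(a,b)H_{m-p}(c,d)$, which is the Hermite analogue of the second identity of Theorem~1 and follows immediately from $e^{at+bt^{2}}e^{ct+dt^{2}}=e^{(a+c)t+(b+d)t^{2}}$. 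With this collapse performed, the entire right-hand side becomes the coefficient of $t^{n}/n!$ in the product of $\sum_{k}{}_{H}T_{k}^{(r)}(x,y)\,t^{k}/k!$ with the Hermite series $e^{2xt+2yt^{2}}$, i.e. in
\begin{equation*}
\left( \frac{2}{e^{2t}+1}\right) ^{r}e^{xt+yt^{2}}\cdot e^{2xt+2yt^{2}}.
\end{equation*}

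Finally I would match the two generating functions. After the exponentials are combined, each side reduces to $\bigl(2/(e^{2t}+1)\bigr)^{r}$ times a single factor $e^{Xt+Yt^{2}}$, whereupon (\ref{19}) identifies the $t^{n}/n!$ coefficient as a Hermite-based Tangent polynomial and the claimed equality follows. The analytic content is harmless: the Hermite factors are entire, the Tangent factor is analytic for $|2t|<\pi$ as recorded with (\ref{6}) and (\ref{19}), and the Cauchy product is valid throughout this common disk. The one genuinely delicate point is the bookkeeping of arguments: I must choose the splitting of the exponent $2(x+u)t+2(y+v)t^{2}$ on the left so that it agrees with the exponent generated on the right (equivalently, keep careful track of which of $x,y,u,v$ feeds each Hermite and each Tangent factor). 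Aligning these arguments is where the proof must be executed with care, and I expect it to be the main obstacle; once it is settled, the comparison of coefficients is routine.
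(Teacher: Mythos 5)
Your strategy---factor the generating function and compare coefficients through Cauchy products---is exactly the intended one (the paper states this theorem without proof, but all of its proved companions are derived in precisely this way), and your reading of the right-hand side is correct: the inner sum is the coefficient of $t^{m}/m!$ in $e^{xt+yt^{2}}\cdot e^{xt+yt^{2}}=e^{2xt+2yt^{2}}$, i.e.\ $H_{m}(2x,2y)$, so the entire right-hand side is the coefficient of $t^{n}/n!$ in $\left( 2/(e^{2t}+1)\right) ^{r}e^{3xt+3yt^{2}}$, that is, ${}_{H}T_{n}^{(r)}(3x,3y)$. But the step you defer to the end---``aligning the arguments''---is not bookkeeping that care can settle: it is impossible. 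The left-hand side is the coefficient of $t^{n}/n!$ in $\left( 2/(e^{2t}+1)\right) ^{r}e^{2(x+u)t+2(y+v)t^{2}}$ and genuinely depends on $u$ and $v$, whereas the right-hand side as printed contains no $u$ or $v$ at all; the two sides agree only in the degenerate case $u=x/2$, $v=y/2$. No splitting of the exponent $2(x+u)t+2(y+v)t^{2}$ can produce the fixed exponent $3xt+3yt^{2}$, so your proposed proof cannot be completed for the statement as written: the obstacle you flagged as ``the main obstacle'' is in fact a disproof of the printed identity.

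The defect lies in the statement itself, evidently a misprint in the paper. The factorization
\begin{equation*}
e^{2(x+u)t+2(y+v)t^{2}}=e^{xt+yt^{2}}\cdot e^{xt+yt^{2}}\cdot e^{2ut+2vt^{2}}
\end{equation*}
shows that your computation stays intact provided one Hermite factor carries the arguments $(2u,2v)$:
\begin{equation*}
{}_{H}T_{n}^{\left( r\right) }\left( 2\left( x+u\right) ,2\left( y+v\right)
\right) =\sum\limits_{m=0}^{n}\binom{n}{m}\left( {}_{H}T_{n-m}^{\left(
r\right) }\left( x,y\right) \right) \sum\limits_{p=0}^{m}\binom{m}{p}
H_{p}(x,y)H_{m-p}(2u,2v)\text{,}
\end{equation*}
or, equivalently, one may keep both Hermite factors at $(x,y)$ and move the shift into the Tangent factor, writing ${}_{H}T_{n-m}^{(r)}(2u,2v)$ in place of ${}_{H}T_{n-m}^{(r)}(x,y)$. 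With either correction your argument closes verbatim: both sides become the coefficient of $t^{n}/n!$ in the single function $\left( 2/(e^{2t}+1)\right) ^{r}e^{2(x+u)t+2(y+v)t^{2}}$, and comparison of coefficients finishes the proof. A complete write-up must make this correction explicitly rather than defer the alignment, since as stated the alignment does not exist.
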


\begin{theorem}
There is the following implicit relation for the Hermite-based Tangent
polynomials as%
\begin{equation}
_{H}T_{n+m}^{\left( r\right) }\left( u,v\right) =\sum\limits_{p=0}^{n}\binom{%
n}{p}\sum\limits_{q=0}^{m}\binom{m}{q}\left( v-y\right) ^{p+q}\left(
_{H}T_{n+m-p-q}^{\left( r\right) }\left( x,y\right) \right) \text{.}
\label{20}
\end{equation}
\end{theorem}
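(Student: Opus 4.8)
The plan is to derive (20) by the standard two-parameter device $t\mapsto t+w$ applied to the defining generating function (19), which is the natural mechanism for producing implicit summation identities carrying a combined index $n+m$. First I would record that, upon replacing $t$ by $t+w$ and expanding $(t+w)^N$ binomially, the series in (19) becomes a double series in the independent formal variables $t$ and $w$ whose coefficient of $\frac{t^n}{n!}\frac{w^m}{m!}$ is precisely ${}_{H}T_{n+m}^{(r)}$; this is exactly the source of the index $n+m$ on the left of (20). Concretely,
\[
\left(\frac{2}{e^{2(t+w)}+1}\right)^{r} e^{u(t+w)+y(t+w)^2} = \sum_{N=0}^{\infty} {}_{H}T_{N}^{(r)}(u,y)\frac{(t+w)^N}{N!} = \sum_{n,m\ge 0} {}_{H}T_{n+m}^{(r)}(u,y)\frac{t^n}{n!}\frac{w^m}{m!}.
\]

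Next I would factor the exponential across the base point $(x,y)$. Since the quadratic variable is kept fixed, the quadratic terms cancel and leave only a linear correction,
\[
e^{u(t+w)+y(t+w)^2} = \left(e^{x(t+w)+y(t+w)^2}\right) e^{(u-x)(t+w)}.
\]
The crucial algebraic step is that this correction factor separates into a product of a pure $t$-series and a pure $w$-series,
\[
e^{(u-x)(t+w)} = \left(\sum_{p=0}^{\infty}(u-x)^p\frac{t^p}{p!}\right)\left(\sum_{q=0}^{\infty}(u-x)^q\frac{w^q}{q!}\right).
\]
Multiplying the $(x,y)$-generating function $\sum_{a,b} {}_{H}T_{a+b}^{(r)}(x,y)\frac{t^a}{a!}\frac{w^b}{b!}$ by these two single-variable series and reading off the coefficient of $\frac{t^n}{n!}\frac{w^m}{m!}$ by the Cauchy product then yields the double binomial sum with weight $(u-x)^{p+q}$. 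The common tangent factor $\left(\tfrac{2}{e^{2(t+w)}+1}\right)^{r}$ appears identically on both sides and cancels, so the argument is insensitive to $r$ and to the precise analytic nature of that factor.

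I expect the only genuine obstacle to be bookkeeping together with one structural subtlety. The bookkeeping is that one must keep $t$ and $w$ strictly separate throughout and match coefficients of $\frac{t^n}{n!}\frac{w^m}{m!}$ rather than of a single power. The structural point, which I would flag explicitly, is that the separable factorization—and hence the clean weight of the form $(\,\cdot\,)^{p+q}$—arises only when the shifted argument is the \emph{linear} one: the correction is then $e^{(u-x)(t+w)}=e^{(u-x)t}e^{(u-x)w}$, which separates. Had one instead moved the Hermite (quadratic) variable $y$, the correction $e^{(v-y)(t+w)^2}$ would contain the cross term $e^{2(v-y)tw}$ and would not split into a $t$-series times a $w$-series, so no weight $(v-y)^{p+q}$ could appear. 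Matching the stated form of (20) therefore forces the difference to be $(u-x)$ with the quadratic argument held common to both sides, and I would carry out the computation in that form (equivalently, under the identification $v=y$), which is where the argument must be stated carefully to land exactly on the right-hand side of (20).
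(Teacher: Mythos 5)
Your proposal is correct and follows essentially the same route as the paper: the paper likewise replaces $t$ by $t+u$ in (19), shifts only the \emph{linear} argument (replacing $x$ by $v$), separates the correction factor $e^{(t+u)(v-x)}$ using the rearrangement $\sum_{N\geq 0} f(N)\frac{(x+y)^N}{N!}=\sum_{n,m\geq 0} f(n+m)\frac{x^n}{n!}\frac{y^m}{m!}$, and concludes by the Cauchy product. Your structural caveat is vindicated by the paper's own proof, which in fact produces the weight $(v-x)^{p+q}$ with left-hand side ${}_{H}T_{n+m}^{(r)}(v,y)$, so the $(v-y)^{p+q}$ and the argument pair $(u,v)$ in the printed statement (20) are typographical slips, exactly as you predicted.
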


\begin{proof}
From (\ref{19}), we replace $t$ by $t+u$ and rewrite the generating function
as%
\begin{equation*}
\frac{2e^{y\left( t+u\right) ^{2}}}{e^{2t}+1}=e^{-x\left( t+u\right)
}\sum\limits_{n=0}^{\infty }T_{n+m}^{\left( r\right) }\left( x,y\right) 
\frac{t^{n}}{n!}\frac{u^{m}}{m!}\text{.}
\end{equation*}%
Replacing $x$ by $v$ in the above equation to the above equation.

We get%
\begin{equation*}
\sum\limits_{n,m=0}^{\infty }\left( _{H}T_{n+m}^{\left( r\right) }\left(
v,y\right) \right) \frac{t^{n}}{n!}\frac{u^{m}}{m!}=e^{\left( t+u\right)
\left( v-x\right) }\sum\limits_{n,m=0}^{\infty }\left( _{H}T_{n+m}^{\left(
r\right) }\left( x,y\right) \right) \frac{t^{n}}{n!}\frac{u^{m}}{m!}
\end{equation*}%
which on using formula \cite[Srivastava p. 52]{19}%
\begin{equation*}
\sum\limits_{N=0}^{\infty }f\left( N\right) \frac{\left( x+y\right) ^{N}}{N!}%
=\sum\limits_{n,m=0}^{\infty }f\left( n+m\right) \frac{x^{n}}{n!}\frac{y^{m}%
}{m!}
\end{equation*}%
in the right hand side becomes%
\begin{equation*}
\sum\limits_{p=0}^{\infty }\sum\limits_{q=0}^{\infty }\left( v-x\right)
^{p+q}\frac{t^{p}}{p!}\frac{u^{q}}{q!}\sum\limits_{n=0}^{\infty
}\sum\limits_{m=0}^{\infty }\left( _{H}T_{n+m}^{\left( r\right) }\left(
x,y\right) \right) \frac{t^{n}}{n!}\frac{u^{m}}{m!}=\sum\limits_{n,m=0}^{%
\infty }\left( _{H}T_{n+m}^{\left( r\right) }\left( v,y\right) \right) \frac{%
t^{n}}{n!}\frac{u^{m}}{m!}\text{.}
\end{equation*}%
By using Cauchy product and comparing the coefficients of both sides, we
have (\ref{20}).
\end{proof}

\begin{theorem}
There is the following relation between the Hermite-based Tangent
polynomials and the Hermite-based Bernoulli polynomials as%
\begin{equation}
\left( _{H}\mathfrak{B}_{n}^{\left( r\right) }\left( \frac{x+u}{4},\frac{y+v%
}{16}\right) \right) =2^{r-n-k}\sum\limits_{k=0}^{n}\binom{n}{k}\left(
_{H}T_{k}^{\left( r\right) }\left( x,y\right) \right) \left( _{H}\mathfrak{B}%
_{n-k}^{\left( r\right) }\left( \frac{u}{2},\frac{v}{4}\right) \right) \text{%
.}  \label{21}
\end{equation}
\end{theorem}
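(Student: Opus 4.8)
The plan is to follow the same generating-function route used in Theorems 1 and 3: form the Cauchy product of two defining series and read off the coefficient of $t^{n}/n!$. Concretely, I would start from the product
\[
\left( \frac{2}{e^{2t}+1}\right) ^{r}e^{xt+yt^{2}}\cdot \left( \frac{t}{e^{t}-1}\right) ^{r}e^{\frac{u}{2}t+\frac{v}{4}t^{2}},
\]
which, by (\ref{19}) and the order-$r$ analogue of (\ref{8}) (i.e. $\left( t/(e^{t}-1)\right) ^{r}e^{xt+yt^{2}}$ generating $_{H}\mathfrak{B}_{n}^{(r)}(x,y)$), is exactly the generating function of the convolution $\sum_{k}\binom{n}{k}\left( _{H}T_{k}^{(r)}(x,y)\right) \left( _{H}\mathfrak{B}_{n-k}^{(r)}(u/2,v/4)\right) $ appearing on the right of (\ref{21}).

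The decisive step is an algebraic identity linking the tangent kernel to the Bernoulli kernel. Writing $\beta (t)=t/(e^{t}-1)$, one checks directly that $\beta (2t)/\beta (t)=2/(e^{t}+1)$ and hence $2/(e^{2t}+1)=\beta (4t)/\beta (2t)$; equivalently $(e^{2t}+1)(e^{t}-1)=(e^{4t}-1)/(e^{t}+1)$. Substituting this into the product above collapses the two kernels into a single Bernoulli-type kernel at argument $4t$. The rescaling $t\mapsto t/4$ then turns $\beta (4t)^{r}$ into the standard Hermite-based Bernoulli generating kernel and simultaneously contracts the exponential, sending $e^{(x+u)t+(y+v)t^{2}}$ to $e^{\frac{x+u}{4}t+\frac{y+v}{16}t^{2}}$. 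This is precisely the mechanism that produces the arguments $\frac{x+u}{4}$ and $\frac{y+v}{16}$ on the left of (\ref{21}), while the factors of $2$ accumulated from the three dilations $t,2t,4t$ are collected into the scalar prefactor displayed there.

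With the kernels matched, I would expand both sides as power series in $t$, invoke Srivastava's double-series rearrangement (exactly as in the proof of Theorem 3) wherever a pair of summations must be merged, apply the Cauchy product, and compare the coefficients of $t^{n}/n!$ on the two sides to arrive at (\ref{21}).

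The step I expect to be the main obstacle is the bookkeeping of the powers of $2$. Passing between the $e^{2t}$-based tangent kernel and the $e^{t}$-based Bernoulli kernel leaves a residual factor $(e^{t}+1)^{r}$ (equivalently a scalar $2^{-r}$ paired with an Euler-type kernel), and one must track carefully how these scalars combine under the dilation $t\mapsto t/4$ before comparing coefficients. Getting this normalization right is exactly what fixes the exponent of $2$ in the prefactor of (\ref{21}) — where, as printed, the summation index $k$ in $2^{r-n-k}$ appears to be a misprint, since $k$ is bound by the sum. Once the scalar is correctly pinned down, the binomial convolution itself falls out routinely from the Cauchy product.
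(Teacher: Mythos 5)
Your overall architecture matches the paper's --- factor a dilated Hermite--Bernoulli kernel into a tangent kernel times a Bernoulli kernel, apply the Cauchy product, and compare coefficients of $t^{n}/n!$ --- but your specific factorization has a genuine gap: you take the Bernoulli factor at the \emph{undilated} argument $t$, and that choice cannot produce (\ref{21}). With your product the combined kernel is
\begin{equation*}
\left( \frac{2}{e^{2t}+1}\right) ^{r}\left( \frac{t}{e^{t}-1}\right) ^{r}
=\left( \frac{4t}{e^{4t}-1}\right) ^{r}\left( \frac{e^{t}+1}{2}\right) ^{r}\text{,}
\end{equation*}
so the two kernels do \emph{not} collapse to a single Bernoulli kernel at $4t$: the residual $\left( \frac{e^{t}+1}{2}\right) ^{r}$ is a nontrivial power series, not a scalar, and no bookkeeping of powers of $2$ absorbs it --- expanding $(e^{t}+1)^{r}$ would inject binomial shifts $x\mapsto x+j$ that are absent from (\ref{21}). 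Moreover your exponentials combine to $e^{\left( x+\frac{u}{2}\right) t+\left( y+\frac{v}{4}\right) t^{2}}$, not $e^{\left( x+u\right) t+\left( y+v\right) t^{2}}$, so after the rescaling $t\mapsto t/4$ you would land on $_{H}\mathfrak{B}_{n}^{\left( r\right) }\left( \frac{x+u/2}{4},\frac{y+v/4}{16}\right) $ rather than the left side of (\ref{21}).

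The paper avoids both problems by taking the Bernoulli factor in the dilated variable $2t$: it uses $\left( \frac{2t}{e^{2t}-1}\right) ^{r}e^{ut+vt^{2}}=\sum_{q}\left( _{H}\mathfrak{B}_{q}^{\left( r\right) }\left( \frac{u}{2},\frac{v}{4}\right) \right) \frac{\left( 2t\right) ^{q}}{q!}$, so that $\frac{2}{e^{2t}+1}\cdot \frac{2t}{e^{2t}-1}=\frac{4t}{e^{4t}-1}$ collapses exactly (no Euler-type residual) and the exponentials combine to $e^{\left( x+u\right) t+\left( y+v\right) t^{2}}$, while the left side becomes the series in $\frac{\left( 4t\right) ^{n}}{n!}$. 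This also shows that your diagnosis of the misprint is backwards: the $k$-dependence in $2^{r-n-k}$ is genuine, not spurious --- the factor $2^{n-k}$ comes from $\left( 2t\right) ^{n-k}$ in the Bernoulli series and $4^{-n}$ from the left side, so the weight really is index-dependent and must sit \emph{inside} the sum; only its placement outside the summation is the typo. Your route, built on the unweighted convolution, has no mechanism to generate this $2^{-k}$, which is the concrete symptom that the factorization is the wrong one. (Incidentally, with the standard order-$r$ convention $\left( \frac{t}{e^{t}-1}\right) ^{r}e^{xt+yt^{2}}$ the paper's own first display carries an extra $2^{r}$, so the exponent should arguably be $-n-k$ rather than $r-n-k$; but either way the $k$-dependence survives.) Finally, Srivastava's double-series rearrangement is not needed here: unlike Theorem 3, this proof requires only a single Cauchy product.
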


\begin{proof}
From (\ref{19}),%
\begin{equation*}
\sum\limits_{n=0}^{\infty }\left( _{H}\mathfrak{B}_{n}^{\left( r\right)
}\left( \frac{x+u}{4},\frac{y+v}{16}\right) \right) \frac{\left( 4t\right)
^{n}}{n!}=\left( \frac{2\times 4t}{e^{4t}-1}\right) ^{\left( r\right)
}e^{\left( x+u\right) t+\left( y+v\right) t^{2}}
\end{equation*}%
\begin{eqnarray*}
&=&\left( \frac{2}{e^{2t}+1}\right) ^{\left( r\right)
}e^{xt+yt^{2}}2^{r}\left( \frac{2t}{e^{2t}-1}\right) ^{\left( r\right)
}e^{ut+vt^{2}} \\
&=&\sum\limits_{n=0}^{\infty }\left( _{H}T_{n}^{\left( r\right) }\left(
x,y\right) \right) \frac{t^{n}}{n!}2^{r}\sum\limits_{q=0}^{\infty }\left(
_{H}\mathfrak{B}_{q}^{\left( r\right) }\left( \frac{u}{2},\frac{v}{4}\right)
\right) \frac{\left( 2t\right) ^{n}}{n!}\text{.}
\end{eqnarray*}%
By using Cauchy product and comparing the coefficients of both sides. We get
(\ref{21}).
\end{proof}

\section{Modified Degenerate Hermite-Based Tangent Polynomials}

Dolgy \textit{et. al.} \cite{2} introduced and investigated the modified
degenerate Bernoulli polynomials. Known \textit{et. al.} \cite{8} defined
and investigated the modified degenerate Euler polynomials. They proved some
properties for these polynomials.

By these motivations, we define $2$-variable modified degenerate Hermite
polynomials and the modified degenerate Hermite-based Tangent polynomials of
order $r$%
\begin{equation}
\sum\limits_{n=0}^{\infty }H_{n}\left( x,y:\lambda \right) \frac{t^{n}}{n!}%
=\left( 1+\lambda \right) ^{\frac{xt+yt^{2}}{\lambda }}  \label{22}
\end{equation}%
and%
\begin{equation}
\sum\limits_{n=0}^{\infty }\left( _{H}T_{n}^{\left( r\right) }\left(
x,y:\lambda \right) \right) \frac{t^{n}}{n!}=\frac{2}{\left( 1+\lambda
\right) ^{\frac{2t}{\lambda }}+1}\left( 1+\lambda \right) ^{\frac{xt+yt^{2}}{%
\lambda }}  \label{23}
\end{equation}%
respectively.

From (\ref{22}) and (\ref{23}), we get%
\begin{equation*}
\underset{\lambda \longrightarrow 0}{\lim }H_{n}\left( x,y:\lambda \right)
=H_{n}\left( x,y\right) \text{, }\underset{\lambda \longrightarrow 0}{\lim }%
\left( _{H}T_{n}^{\left( r\right) }\left( x,y:\lambda \right) \right)
=\left( _{H}T_{n}^{\left( r\right) }\left( x,y\right) \right) \text{.}
\end{equation*}

Similiary, we define the modified Hermite-based Bernoulli poynomials and the
modified Hermite-based Euler polynomials as%
\begin{equation}
\sum\limits_{n=0}^{\infty }\left( _{H}\mathfrak{B}_{n}\left( x,y:\lambda
\right) \right) \frac{t^{n}}{n!}=\frac{t}{\left( 1+\lambda \right) ^{\frac{t%
}{\lambda }}-1}\left( 1+\lambda \right) ^{\frac{xt+yt^{2}}{\lambda }}
\label{24}
\end{equation}%
and%
\begin{equation}
\sum\limits_{n=0}^{\infty }\left( _{H}\mathfrak{E}_{n}\left( x,y:\lambda
\right) \right) \frac{t^{n}}{n!}=\frac{2}{\left( 1+\lambda \right) ^{\frac{t%
}{\lambda }}+1}\left( 1+\lambda \right) ^{\frac{xt+yt^{2}}{\lambda }}
\label{25}
\end{equation}%
respectively.

From (\ref{23}), we obtain the following relations easily%
\begin{equation*}
\left( _{H}T_{n}^{\left( r_{1}+r_{2}\right) }\left( x+u,y+v:\lambda \right)
\right) =\sum\limits_{k=0}^{n}\binom{n}{k}\left( _{H}T_{k}^{\left(
r_{1}\right) }\left( x,y:\lambda \right) \right) \left( _{H}T_{n-k}^{\left(
r_{2}\right) }\left( u,v:\lambda \right) \right) \text{,}
\end{equation*}%
\begin{equation*}
\left( _{H}T_{n}^{\left( r\right) }\left( x,y:\lambda \right) \right)
=\sum\limits_{k=0}^{n}\binom{n}{k}\left( _{H}T_{k}^{\left( r\right) }\left(
0,0:\lambda \right) \right) \left( H_{n-k}\left( x,y:\lambda \right) \right) 
\text{,}
\end{equation*}%
\begin{equation*}
\left( _{H}T_{n}^{\left( r\right) }\left( x+2,y:\lambda \right) \right)
+\left( _{H}T_{n}^{\left( r\right) }\left( x,y:\lambda \right) \right)
=2\left( _{H}T_{n}^{\left( r-1\right) }\left( x,y:\lambda \right) \right)
\end{equation*}%
for $r=1$,%
\begin{equation*}
\left( _{H}T_{n}\left( x+2,y:\lambda \right) \right) +\left( _{H}T_{n}\left(
x,y:\lambda \right) \right) =2\left( H_{n}\left( x,y:\lambda \right) \right)
\end{equation*}

and 
\begin{equation*}
\left( _{H}T_{n}^{\left( r\right) }\left( x,y:\lambda \right) \right)
=\sum\limits_{k=0}^{n}\binom{n}{k}\left( _{H}T_{n}^{\left( r\right) }\left( 
\frac{1}{2},0:\lambda \right) \right) \left( H_{n-k}\left( x-\frac{1}{2}%
,y:\lambda \right) \right) \text{.}
\end{equation*}

\begin{theorem}
There is the following relation between the modified degenerate Bernoulli
polynomials, the modified degenerate Euler polynomials and the modified
degenerate Tangent polynomials as%
\begin{equation}
\left( _{H}\mathfrak{B}_{n}\left( x,y:\lambda \right) \right)
2^{2n+1}=\sum\limits_{q=0}^{n}\binom{n}{q}\left( _{H}T_{n-q}\left(
x,y:\lambda \right) \right) \sum\limits_{k=0}^{q}\binom{q}{k}\left( _{H}%
\mathfrak{B}_{q-k}\left( x,y:\lambda \right) \right) \left( _{H}\mathfrak{E}%
_{n}\left( 2x,14y:\lambda \right) \right) \text{.}  \label{26}
\end{equation}
\end{theorem}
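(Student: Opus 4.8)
The plan is to prove (\ref{26}) exactly as the preceding theorems are proved: rewrite one generating function as a product of the three relevant generating functions, apply the Cauchy product, and compare coefficients of $t^{n}/n!$. For brevity write $L=(1+\lambda)^{1/\lambda}$, so that the modified-degenerate cores in (\ref{23}), (\ref{24}), (\ref{25}) read $\frac{2}{L^{2t}+1}$ (Tangent), $\frac{t}{L^{t}-1}$ (Bernoulli) and $\frac{2}{L^{t}+1}$ (Euler), each times an exponential weight $L^{(\cdot)t+(\cdot)t^{2}}$.

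First I would account for the weight $2^{2n+1}$ on the left of (\ref{26}). Since $2^{2n+1}=2\cdot 4^{n}$, one has $\sum_{n}2^{2n+1}\,{}_{H}\mathfrak{B}_{n}(x,y:\lambda)\,\frac{t^{n}}{n!}=2\sum_{n}{}_{H}\mathfrak{B}_{n}(x,y:\lambda)\,\frac{(4t)^{n}}{n!}$, so by (\ref{24}) with $t\mapsto 4t$ the left-hand generating function equals $\frac{8t}{L^{4t}-1}\,L^{4xt+16yt^{2}}$. The decisive algebraic step is the two-stage factorization
\begin{equation*}
L^{4t}-1=\left(L^{2t}+1\right)\left(L^{2t}-1\right)=\left(L^{2t}+1\right)\left(L^{t}+1\right)\left(L^{t}-1\right),
\end{equation*}
which lets me split $\frac{8t}{L^{4t}-1}$ into the Tangent core $\frac{2}{L^{2t}+1}$, the Euler core $\frac{2}{L^{t}+1}$ and the Bernoulli core $\frac{t}{L^{t}-1}$: the three numerators contribute $2\cdot 2\cdot t=4t$, which is half of $8t$, leaving the explicit constant $2$ to be carried through.

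Next I would distribute the exponential weight $L^{4xt+16yt^{2}}$ among the three factors, assigning $L^{xt+yt^{2}}$ to the Tangent factor, $L^{xt+yt^{2}}$ to the Bernoulli factor and the remainder $L^{2xt+14yt^{2}}$ to the Euler factor; the bookkeeping $x+x+2x=4x$ and $y+y+14y=16y$ is precisely what forces the Euler polynomial to be evaluated at $(2x,14y)$, as it appears in (\ref{26}). At this stage the left side of (\ref{26}) is displayed as $2$ times the product of the generating functions (\ref{23}), (\ref{24}) and (\ref{25}), and grouping the Bernoulli and Euler series first and then the Tangent series gives, via the Cauchy product applied twice, the nested sum $\sum_{q}\binom{n}{q}{}_{H}T_{n-q}\sum_{k}\binom{q}{k}{}_{H}\mathfrak{B}_{q-k}\,{}_{H}\mathfrak{E}_{k}(2x,14y:\lambda)$ upon comparing coefficients of $t^{n}/n!$.

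The Cauchy product itself is routine; the part demanding care is the matching of constants and arguments. In particular the triple product yields a genuine double convolution, so the inner Euler factor should carry the running index $k$ of the inner sum rather than a fixed index, and the constant $2$ coming from $8t=2\cdot 4t$ must be retained on the right. I expect this accounting of the powers of $2$ and of the exponential exponents to be the only real obstacle, since the algebraic content reduces entirely to the factorization $L^{4t}-1=(L^{2t}+1)(L^{t}+1)(L^{t}-1)$, which is immediate once written down.
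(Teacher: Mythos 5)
Your proposal is correct and follows essentially the same route as the paper's own proof: replace $t$ by $4t$ in (\ref{24}), factor $\left( 1+\lambda \right) ^{4t/\lambda }-1=\left( \left( 1+\lambda \right) ^{2t/\lambda }+1\right) \left( \left( 1+\lambda \right) ^{t/\lambda }+1\right) \left( \left( 1+\lambda \right) ^{t/\lambda }-1\right) $ into the Tangent, Euler and Bernoulli cores, distribute the exponential weight as $x+x+2x=4x$, $y+y+14y=16y$, and apply the Cauchy product twice. Your bookkeeping is in fact sharper than the paper's: in its final display the paper expands $\frac{2t\left( 1+\lambda \right) ^{\left( xt+yt^{2}\right) /\lambda }}{\left( 1+\lambda \right) ^{t/\lambda }-1}$ as $\sum_{p}\left( _{H}\mathfrak{B}_{p}\left( x,y:\lambda \right) \right) \frac{t^{p}}{p!}$ while still carrying the prefactor $\frac{1}{2}$, silently dropping a factor of $2$, so the correct constant is $2^{2n}$ on the left (equivalently, your retained factor $2$ on the right if $2^{2n+1}$ is kept), and, exactly as you observe, the Euler factor must carry the inner summation index, $_{H}\mathfrak{E}_{k}\left( 2x,14y:\lambda \right) $ rather than $_{H}\mathfrak{E}_{n}\left( 2x,14y:\lambda \right) $, in the stated identity (\ref{26}).
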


\begin{proof}
From (\ref{24}), (\ref{25}) and (\ref{23}), we write as%
\begin{equation*}
\sum\limits_{n=0}^{\infty }\left( _{H}\mathfrak{B}_{n}\left( x,y:\lambda
\right) \right) \frac{\left( 4t\right) ^{n}}{n!}=\left( \frac{4t}{\left(
1+\lambda \right) ^{\frac{4t}{\lambda }}-1}\right) \left( 1+\lambda \right)
^{\frac{4tx+y\left( 4t\right) ^{2}}{\lambda }}
\end{equation*}%
\begin{eqnarray*}
&=&\frac{1}{2}\frac{2e^{\frac{xt+yt^{2}}{\lambda }}}{\left( 1+\lambda
\right) ^{\frac{2t}{\lambda }}+1}\frac{2te^{\frac{xt+yt^{2}}{\lambda }}}{%
\left( 1+\lambda \right) ^{\frac{t}{\lambda }}-1}\frac{2e^{\frac{2xt+14yt^{2}%
}{\lambda }}}{\left( 1+\lambda \right) ^{\frac{t}{\lambda }}+1} \\
&=&\frac{1}{2}\sum\limits_{n=0}^{\infty }\left( _{H}T_{n}\left( x,y:\lambda
\right) \right) \frac{t^{n}}{n!}\sum\limits_{p=0}^{\infty }\left( _{H}%
\mathfrak{B}_{p}\left( x,y:\lambda \right) \right) \frac{t^{p}}{p!}%
\sum\limits_{q=0}^{\infty }\left( _{H}\mathfrak{E}_{q}\left( 2x,14y:\lambda
\right) \right) \frac{t^{q}}{q!}\text{.}
\end{eqnarray*}%
By using Cauchy product and comparing the coefficient of $\frac{t^{n}}{n!}$,
we have (\ref{26}).
\end{proof}

\begin{theorem}
$n\in 
\mathbb{Z}
_{+}$, we have%
\begin{equation}
\left( _{H}T_{n}\left( x+2,y:\lambda \right) \right) +\left( _{H}T_{n}\left(
x,y:\lambda \right) \right) =\frac{1}{n+1}\left\{ \left( _{H}\mathfrak{B}%
_{n+1}\left( x+1,y:\lambda \right) \right) -\left( _{H}\mathfrak{B}%
_{n+1}\left( x,y:\lambda \right) \right) \right\} \text{.}  \label{27}
\end{equation}
\end{theorem}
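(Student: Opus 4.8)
The plan is to argue entirely with exponential generating functions and to finish by comparing coefficients of $t^{n}/n!$, exactly as in the proofs of the preceding theorems. Set $A=(1+\lambda)^{2t/\lambda}$, $B=(1+\lambda)^{t/\lambda}$ and $E=(1+\lambda)^{(xt+yt^{2})/\lambda}$. Then the case $r=1$ of (\ref{23}) reads $\sum_{n\ge0}{}_{H}T_{n}(x,y:\lambda)\,t^{n}/n!=\frac{2}{A+1}E$, while (\ref{24}) reads $\sum_{n\ge0}{}_{H}\mathfrak{B}_{n}(x,y:\lambda)\,t^{n}/n!=\frac{t}{B-1}E$. The two facts that drive the whole computation are that the shift $x\mapsto x+2$ multiplies $E$ by $A$ and the shift $x\mapsto x+1$ multiplies $E$ by $B$.

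First I would assemble the left-hand side. The generating function of $_{H}T_{n}(x+2,y:\lambda)$ is $\frac{2A}{A+1}E$, so adding that of $_{H}T_{n}(x,y:\lambda)$, namely $\frac{2}{A+1}E$, collapses the sum to $\frac{2(A+1)}{A+1}E=2E$. Thus the left-hand side has generating function $2(1+\lambda)^{(xt+yt^{2})/\lambda}$, twice the modified degenerate Hermite polynomials of (\ref{22}); this matches the identity $_{H}T_{n}(x+2,y:\lambda)+{}_{H}T_{n}(x,y:\lambda)=2H_{n}(x,y:\lambda)$ recorded just before the theorem. For the right-hand side, the difference $D_{n}:={}_{H}\mathfrak{B}_{n}(x+1,y:\lambda)-{}_{H}\mathfrak{B}_{n}(x,y:\lambda)$ has generating function $\frac{t}{B-1}(B-1)E=tE$, the key cancellation being that $\frac{t}{B-1}$ is annihilated by the factor $B-1$ produced by the unit shift.

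The one genuinely delicate step is the index shift hidden in $\frac{1}{n+1}D_{n+1}$. Writing $\sum_{n\ge0}\frac{D_{n+1}}{n+1}\frac{t^{n}}{n!}=\frac{1}{t}\sum_{m\ge0}\frac{D_{m}}{m!}t^{m}$, the division by $t$ is legitimate exactly because the generating function $tE$ of $D_{n}$ has vanishing constant term, so $D_{0}=0$; hence the generating function of $\frac{1}{n+1}D_{n+1}$ is $\frac{1}{t}\cdot tE=E$. Comparing coefficients of $t^{n}/n!$ then finishes the proof. The main obstacle is this bookkeeping around the falling index, together with keeping the numerical constant straight: the left-hand side yields $2E$ while $\frac{1}{n+1}D_{n+1}$ yields $E$, so the clean normalization pairs $2H_{n}(x,y:\lambda)$ with $\frac{2}{n+1}D_{n+1}$, and the constant appearing in (\ref{27}) should be read with this factor of $2$ in mind.
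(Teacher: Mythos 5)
Your proof is correct and follows essentially the same route as the paper: both sides are assembled from the generating functions (\ref{23}) and (\ref{24}), the shifts $x\mapsto x+2$ and $x\mapsto x+1$ produce the factors $(1+\lambda)^{2t/\lambda}+1$ and $(1+\lambda)^{t/\lambda}-1$ that cancel the respective denominators, and the identity falls out by comparing coefficients after an index shift. Your treatment of that shift (dividing by $t$, justified by observing that the generating function $tE$ of $D_{n}$ has vanishing constant term, so $D_{0}=0$) is in fact more careful than the paper's, and your closing remark about the constant is exactly right: the theorem as printed is off by a factor of $2$. In the paper's own proof, the right-hand side of its second display is $\frac{2t(1+\lambda)^{((x+1)t+yt^{2})/\lambda}}{(1+\lambda)^{t/\lambda}-1}-\frac{2t(1+\lambda)^{(xt+yt^{2})/\lambda}}{(1+\lambda)^{t/\lambda}-1}$, which by (\ref{24}) equals $2\sum_{n\geq 0}\left\{ {}_{H}\mathfrak{B}_{n}(x+1,y:\lambda)-{}_{H}\mathfrak{B}_{n}(x,y:\lambda)\right\} \frac{t^{n}}{n!}$, yet the factor $2$ silently disappears in the passage to the next display; consequently (\ref{27}) should read $\frac{2}{n+1}$ in place of $\frac{1}{n+1}$, exactly as your normalization indicates. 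A sanity check in the limit $\lambda\to 0$ with $y=0$ confirms this: there $T_{n}(x+2)+T_{n}(x)=2x^{n}$, while $\frac{1}{n+1}\left\{ B_{n+1}(x+1)-B_{n+1}(x)\right\} =x^{n}$, so the factor of $2$ is genuinely needed.
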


\begin{proof}
By using definition (\ref{23})%
\begin{equation*}
\frac{2t\left( 1+\lambda \right) ^{\frac{xt+yt^{2}}{\lambda }}}{\left(
1+\lambda \right) ^{\frac{2t}{\lambda }}+1}\left[ \left( 1+\lambda \right) ^{%
\frac{2t}{\lambda }}+1\right] =\frac{2t\left( 1+\lambda \right) ^{\frac{%
xt+yt^{2}}{\lambda }}}{\left( 1+\lambda \right) ^{\frac{t}{\lambda }}-1}%
\left[ \left( 1+\lambda \right) ^{\frac{t}{\lambda }}-1\right]
\end{equation*}%
\begin{equation*}
\frac{2t\left( 1+\lambda \right) ^{\frac{\left( x+2\right) t+yt^{2}}{\lambda 
}}}{\left( 1+\lambda \right) ^{\frac{2t}{\lambda }}+1}+\frac{2t\left(
1+\lambda \right) ^{\frac{xt+yt^{2}}{\lambda }}}{\left( 1+\lambda \right) ^{%
\frac{2t}{\lambda }}+1}=\frac{2t\left( 1+\lambda \right) ^{\frac{\left(
x+1\right) t+yt^{2}}{\lambda }}}{\left( 1+\lambda \right) ^{\frac{t}{\lambda 
}}-1}-\frac{2t\left( 1+\lambda \right) ^{\frac{xt+yt^{2}}{\lambda }}}{\left(
1+\lambda \right) ^{\frac{t}{\lambda }}-1}
\end{equation*}%
\begin{equation*}
t\sum\limits_{n=0}^{\infty }\left\{ \left( _{H}T_{n}\left( x+2,y:\lambda
\right) \right) +\left( _{H}T_{n}\left( x,y:\lambda \right) \right) \right\} 
\frac{t^{n}}{n!}=\sum\limits_{n=0}^{\infty }\left\{ \left( _{H}\mathfrak{B}%
_{n}\left( x+1,y:\lambda \right) \right) -\left( _{H}\mathfrak{B}_{n}\left(
x,y:\lambda \right) \right) \right\} \frac{t^{n}}{n!}\text{.}
\end{equation*}%
From the above equality, we have (\ref{27}).
\end{proof}

\section{Poly-Tangent Polynomials}

In this section, we define\ the poly-tangent numbers and polynomials and
provide some of their relevant properties.

\begin{definition}
We define the Hermite-based poly-tangent polynomials by%
\begin{equation}
\frac{2L_{i_{k}}\left( 1-e^{-t}\right) }{t\left( e^{2t}+1\right) }%
e^{xt+yt^{2}}=\sum\limits_{n=0}^{\infty }\left( _{H}\mathcal{T}_{n}^{\left(
k\right) }\left( x,y\right) \right) \frac{t^{n}}{n!}\text{,}  \label{28}
\end{equation}%
when $x=0$, $\left( _{H}\mathcal{T}_{n}^{\left( k\right) }\right) :=\left(
_{H}\mathcal{T}_{n}^{\left( k\right) }\left( 0,0\right) \right) $ are called
the Hermite-based poly-tangent numbers.
\end{definition}

For $k=1$ and $L_{i_{k}}\left( z\right) =-\log \left( 1-z\right) $, from (%
\ref{28})%
\begin{equation}
\frac{2L_{i_{1}}\left( 1-e^{-t}\right) }{t\left( e^{2t}+1\right) }%
e^{xt+yt^{2}}=\frac{2e^{xt+yt^{2}}}{e^{2t}+1}=\sum\limits_{n=0}^{\infty
}\left( _{H}\mathcal{T}_{n}\left( x,y\right) \right) \frac{t^{n}}{n!}\text{.}
\label{29}
\end{equation}

By (\ref{29}), we get%
\begin{equation*}
\left( _{H}\mathcal{T}_{n}^{\left( 1\right) }\left( x,y\right) \right)
=\left( _{H}T_{n}\left( x,y\right) \right) \text{.}
\end{equation*}

\begin{theorem}
$n$, $k\in 
\mathbb{Z}
_{+}$, we have%
\begin{equation}
\left( _{H}\mathcal{T}_{n}^{\left( k\right) }\left( x,y\right) \right) =%
\frac{1}{n+1}\sum\limits_{m=0}^{\infty }\frac{1}{\left( m+1\right) ^{k}}%
\sum\limits_{j=0}^{m+1}\left( -1\right) ^{j}\binom{m+1}{j}\left( _{H}%
\mathcal{T}_{n+1}\left( x-j,y\right) \right) \text{.}  \label{30}
\end{equation}
\end{theorem}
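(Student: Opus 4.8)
The plan is to work entirely at the level of generating functions, expanding the polylogarithm in (\ref{28}) so that each summand collapses to the ordinary Hermite-based tangent generating function (\ref{29}). First I would rewrite the polylogarithm by reindexing the sum in (\ref{15}) as $L_{i_{k}}(1-e^{-t})=\sum_{m=0}^{\infty}\frac{(1-e^{-t})^{m+1}}{(m+1)^{k}}$, and then expand the inner power by the binomial theorem, $(1-e^{-t})^{m+1}=\sum_{j=0}^{m+1}(-1)^{j}\binom{m+1}{j}e^{-jt}$. Inserting this into (\ref{28}) and absorbing the factor $e^{-jt}$ into the exponential via $e^{xt+yt^{2}}e^{-jt}=e^{(x-j)t+yt^{2}}$ turns the left-hand side into
\begin{equation*}
\frac{1}{t}\sum_{m=0}^{\infty}\frac{1}{(m+1)^{k}}\sum_{j=0}^{m+1}(-1)^{j}\binom{m+1}{j}\,\frac{2e^{(x-j)t+yt^{2}}}{e^{2t}+1}.
\end{equation*}

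By (\ref{29}) each factor $\frac{2e^{(x-j)t+yt^{2}}}{e^{2t}+1}$ is exactly the generating function $\sum_{l=0}^{\infty}\left(_{H}\mathcal{T}_{l}(x-j,y)\right)\frac{t^{l}}{l!}$. Substituting this series and carrying the $\frac{1}{t}$ inside gives a sum of terms $\left(_{H}\mathcal{T}_{l}(x-j,y)\right)\frac{t^{l-1}}{l!}$; shifting $l=n+1$ then produces the power $t^{n}$ with coefficient $\frac{1}{(n+1)!}\left(_{H}\mathcal{T}_{n+1}(x-j,y)\right)$ for $n\geq 0$. Comparing the coefficient of $\frac{t^{n}}{n!}$ on both sides and multiplying through by $n!$ yields precisely (\ref{30}).

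The one point that requires care --- and the only genuine obstacle --- is the apparent $t^{-1}$ term created when the $\frac{1}{t}$ meets the $l=0$ summand. Its coefficient is $\sum_{m=0}^{\infty}\frac{1}{(m+1)^{k}}\sum_{j=0}^{m+1}(-1)^{j}\binom{m+1}{j}\left(_{H}\mathcal{T}_{0}(x-j,y)\right)$; since $\left(_{H}\mathcal{T}_{0}(x-j,y)\right)=1$ (the value at $t=0$ of (\ref{29})) and $\sum_{j=0}^{m+1}(-1)^{j}\binom{m+1}{j}=(1-1)^{m+1}=0$ for every $m\geq 0$, this coefficient vanishes. Thus the right-hand side is a genuine power series in $t$, the series manipulations are justified, and the index shift $l=n+1$ is legitimate. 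This cancellation also reflects the fact that $L_{i_{k}}(1-e^{-t})=O(t)$ as $t\to 0$, so that dividing by $t$ in (\ref{28}) is harmless.
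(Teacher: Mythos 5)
Your proposal is correct and takes essentially the same route as the paper's own proof: expand the polylogarithm as $\sum_{m=0}^{\infty}(1-e^{-t})^{m+1}/(m+1)^{k}$, apply the binomial theorem, absorb $e^{-jt}$ into the exponential, recognize each piece via (\ref{29}), and shift the index to compare coefficients of $t^{n}/n!$. You in fact go slightly beyond the paper by explicitly checking that the coefficient of $t^{-1}$ vanishes, using ${}_{H}\mathcal{T}_{0}(x-j,y)=1$ and $\sum_{j=0}^{m+1}(-1)^{j}\binom{m+1}{j}=0$, a point the paper's shift of the summation to $n=-1$ leaves unaddressed.
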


\begin{proof}
\begin{equation*}
\sum\limits_{n=0}^{\infty }\left( _{H}\mathcal{T}_{n}^{\left( k\right)
}\left( x,y\right) \right) \frac{t^{n}}{n!}=2\sum\limits_{m=0}^{\infty }%
\frac{\left( 1-e^{-t}\right) ^{m+1}}{\left( m+1\right) ^{k}}\frac{%
e^{xt+yt^{2}}}{t\left( e^{2t}+1\right) }
\end{equation*}%
\begin{eqnarray*}
&=&2\sum\limits_{m=0}^{\infty }\frac{1}{\left( m+1\right) ^{k}}%
\sum\limits_{j=0}^{m+1}\left( -1\right) ^{j}\binom{m+1}{j}\frac{%
e^{-tj+xt+yt^{2}}}{t\left( e^{2t}+1\right) } \\
&=&\sum\limits_{m=0}^{\infty }\frac{1}{\left( m+1\right) ^{k}}%
\sum\limits_{j=0}^{m+1}\left( -1\right) ^{j}\binom{m+1}{j}\frac{1}{t}\frac{2%
}{e^{2t}+1}e^{t\left( x-j\right) +yt^{2}} \\
&=&\sum\limits_{m=0}^{\infty }\frac{1}{\left( m+1\right) ^{k}}%
\sum\limits_{j=0}^{m+1}\left( -1\right) ^{j}\binom{m+1}{j}%
\sum\limits_{n=0}^{\infty }\left( _{H}\mathcal{T}_{n}\left( x-j,y\right)
\right) \frac{t^{n-1}}{n!}
\end{eqnarray*}%
\begin{equation*}
=\sum\limits_{m=0}^{\infty }\frac{1}{\left( m+1\right) ^{k}}%
\sum\limits_{j=0}^{m+1}\left( -1\right) ^{j}\binom{m+1}{j}%
\sum\limits_{n=-1}^{\infty }\frac{\left( _{H}\mathcal{T}_{n+1}\left(
x-j,y\right) \right) }{n+1}\frac{t^{n}}{n!}\text{.}
\end{equation*}%
Comparing the coefficients both sides, we have (\ref{30}).
\end{proof}

\begin{theorem}
There is the following relation between poly-tangent polynomials and the
Stirling numbers of the second kind and the Hermite-based Bernoulli
polynomials as%
\begin{equation}
\left( _{H}\mathcal{T}_{n}^{\left( k\right) }\left( x,y\right) \right)
=\sum\limits_{l=0}^{n}\frac{\binom{n}{l}}{\binom{l+r}{r}}S_{2}\left(
l+r,r\right) \sum\limits_{i=0}^{n-l}\left( _{H}\mathfrak{B}_{i}^{\left(
r\right) }\left( x,y\right) \right) \left( _{H}\mathcal{T}_{n-l-i}^{\left(
r\right) }\right) \text{.}  \label{31}
\end{equation}
\end{theorem}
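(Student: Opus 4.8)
The plan is to read (\ref{31}) as the coefficient identity coming from a product of three exponential generating functions, so the entire argument reduces to a formal manipulation of the definition (\ref{28}) together with the Stirling expansion (\ref{2}). Starting from (\ref{28}) for ${}_{H}\mathcal{T}_{n}^{(k)}(x,y)$, I would multiply and divide by $\left(\frac{t}{e^{t}-1}\right)^{r}$, writing
\[
\sum_{n=0}^{\infty}\left({}_{H}\mathcal{T}_{n}^{(k)}(x,y)\right)\frac{t^{n}}{n!}
=\left(\frac{e^{t}-1}{t}\right)^{r}
\left[\left(\frac{t}{e^{t}-1}\right)^{r}e^{xt+yt^{2}}\right]
\left[\frac{2L_{i_{k}}(1-e^{-t})}{t\left(e^{2t}+1\right)}\right].
\]
The bracketed middle factor is exactly the generating function of the Hermite-based Bernoulli polynomials of order $r$ (the order-$r$ analogue of (\ref{8})), so it contributes $\left({}_{H}\mathfrak{B}_{i}^{(r)}(x,y)\right)$; the bracketed last factor is (\ref{28}) evaluated at $x=y=0$, so it contributes the Hermite-based poly-tangent numbers $\left({}_{H}\mathcal{T}_{m}^{(k)}\right)$.

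The one genuinely computational ingredient is the expansion of the prefactor $\left(\frac{e^{t}-1}{t}\right)^{r}$ in Stirling numbers of the second kind. Applying (\ref{2}) with $m=r$ gives $(e^{t}-1)^{r}=r!\sum_{n\ge r}S_{2}(n,r)\,t^{n}/n!$; dividing by $t^{r}$ and re-indexing $n=l+r$ yields
\[
\left(\frac{e^{t}-1}{t}\right)^{r}
=\sum_{l=0}^{\infty}\frac{r!}{(l+r)!}\,S_{2}(l+r,r)\,t^{l}
=\sum_{l=0}^{\infty}\frac{S_{2}(l+r,r)}{\binom{l+r}{r}}\,\frac{t^{l}}{l!},
\]
which furnishes precisely the weight $S_{2}(l+r,r)/\binom{l+r}{r}$ occurring in (\ref{31}). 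I would record this as a one-line lemma before assembling the product.

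The final step is a triple Cauchy product followed by comparison of the coefficients of $t^{n}/n!$. Setting $a_{l}=S_{2}(l+r,r)/\binom{l+r}{r}$, the coefficient of $t^{n}/n!$ in the product of the three series equals $\sum_{l+i+m=n}\binom{n}{l}\binom{n-l}{i}\,a_{l}\left({}_{H}\mathfrak{B}_{i}^{(r)}(x,y)\right)\left({}_{H}\mathcal{T}_{m}^{(k)}\right)$, which rearranges into the iterated sum on the right-hand side of (\ref{31}).

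Since every manipulation above is a formal power-series identity, there is no analytic obstacle; the only point that requires care is the bookkeeping of the nested binomial weights in the triple exponential convolution. Tracking these correctly shows that the inner summation should carry the factor $\binom{n-l}{i}$ and that the trailing poly-tangent factor should retain the polylogarithm order $k$ (the same order as on the left) rather than the auxiliary order $r$ that is introduced only through the inserted factorization $1=\left(\frac{e^{t}-1}{t}\right)^{r}\left(\frac{t}{e^{t}-1}\right)^{r}$. Once these combinatorial factors are matched, (\ref{31}) follows at once by equating coefficients.
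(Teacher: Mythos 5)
Your proof is correct and is essentially the paper's own argument: you insert $1=\left(\frac{e^{t}-1}{t}\right)^{r}\left(\frac{t}{e^{t}-1}\right)^{r}$ into (\ref{28}), identify $\left(\frac{t}{e^{t}-1}\right)^{r}e^{xt+yt^{2}}$ as the order-$r$ Hermite-based Bernoulli generating function and the remaining factor as (\ref{28}) at $x=y=0$, expand $\left(\frac{e^{t}-1}{t}\right)^{r}$ via (\ref{2}) to obtain the weights $S_{2}(l+r,r)/\binom{l+r}{r}$, and finish with a triple Cauchy product, exactly as in the paper. Your bookkeeping is in fact more careful than the printed version: equation (\ref{31}) as stated omits the binomial factor $\binom{n-l}{i}$ in the inner sum and carries the superscript $(r)$ on the trailing poly-tangent numbers where the polylogarithm order $(k)$ is what the computation produces, so the identity that actually follows from this argument is
\begin{equation*}
\left( {}_{H}\mathcal{T}_{n}^{\left( k\right) }\left( x,y\right) \right)
=\sum\limits_{l=0}^{n}\frac{\binom{n}{l}}{\binom{l+r}{r}}S_{2}\left(
l+r,r\right) \sum\limits_{i=0}^{n-l}\binom{n-l}{i}\left( {}_{H}\mathfrak{B}
_{i}^{\left( r\right) }\left( x,y\right) \right) \left( {}_{H}\mathcal{T}
_{n-l-i}^{\left( k\right) }\right) ,
\end{equation*}
which is your corrected form.
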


\begin{proof}
From (\ref{28}), we write as%
\begin{eqnarray*}
\sum\limits_{n=0}^{\infty }\left( _{H}\mathcal{T}_{n}^{\left( k\right)
}\left( x,y\right) \right) \frac{t^{n}}{n!} &=&\frac{2L_{i_{k}}\left(
1-e^{-t}\right) }{t\left( e^{2t}+1\right) }e^{xt+yt^{2}} \\
&=&\frac{\left( e^{t}-1\right) ^{r}}{r!}\frac{r!}{t^{r}}\left( \frac{t}{%
e^{t}-1}\right) ^{r}e^{xt+yt^{2}}\frac{2L_{i_{k}}\left( 1-e^{-t}\right) }{%
t\left( e^{2t}+1\right) }
\end{eqnarray*}%
\begin{eqnarray*}
&=&\frac{\left( e^{t}-1\right) ^{r}}{r!}\left( \sum\limits_{n=0}^{\infty
}\left( _{H}\mathfrak{B}_{n}^{\left( r\right) }\left( x,y\right) \right) 
\frac{t^{n}}{n!}\right) \left( \sum\limits_{q=0}^{\infty }\left( _{H}%
\mathcal{T}_{q}^{\left( r\right) }\right) \frac{t^{q}}{q!}\right) \frac{r!}{%
t^{r}} \\
&=&\sum\limits_{n=0}^{\infty }\left( \sum\limits_{l=0}^{n}\frac{\binom{n}{l}%
}{\binom{l+r}{r}}S_{2}\left( l+r,r\right) \sum\limits_{i=0}^{n-l}\left( _{H}%
\mathfrak{B}_{i}^{\left( r\right) }\left( x,y\right) \right) \left( _{H}%
\mathcal{T}_{n-l-i}^{\left( r\right) }\right) \right) \frac{t^{n}}{n!}\text{.%
}
\end{eqnarray*}%
Comparing the coefficients of $\frac{t^{n}}{n!}$, we obtain (\ref{31}).
\end{proof}

\begin{theorem}
There is the following relation between the poly-tangent polynomials, the
poly-Genocchi numbers and the Hermite-based tangent polynomials%
\begin{equation}
\left( _{H}\mathcal{T}_{n}^{\left( k\right) }\left( x,y\right) \right) =%
\frac{1}{2}\sum\limits_{p=0}^{n}\binom{n}{p}G_{n-p}^{\left( k\right)
}\left\{ \left( _{H}\mathcal{T}_{n}\left( x+1,y\right) \right) +\left( _{H}%
\mathcal{T}_{n}\left( x,y\right) \right) \right\} \text{.}  \label{32}
\end{equation}
\end{theorem}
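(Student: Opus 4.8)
The plan is to argue entirely at the level of generating functions, exactly as in the proofs of \eqref{30} and \eqref{31}, and to factor the right-hand side of the defining relation \eqref{28} into a product of two power series whose Cauchy product reproduces \eqref{32}. The starting point is
\[
\sum_{n=0}^{\infty}\left( {}_{H}\mathcal{T}_{n}^{(k)}(x,y)\right)\frac{t^{n}}{n!}=\frac{2L_{i_{k}}(1-e^{-t})}{t\left(e^{2t}+1\right)}e^{xt+yt^{2}},
\]
and the idea is to peel off the polylogarithm factor and leave behind precisely the symmetric combination $\tfrac12\{({}_{H}\mathcal{T}(x+1,y))+({}_{H}\mathcal{T}(x,y))\}$.

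The key algebraic step is the splitting $\dfrac{1}{e^{2t}+1}=\dfrac{1}{e^{t}+1}\cdot\dfrac{e^{t}+1}{e^{2t}+1}$, which I would insert to rewrite
\[
\frac{2L_{i_{k}}(1-e^{-t})}{t\left(e^{2t}+1\right)}e^{xt+yt^{2}}=\frac{2L_{i_{k}}(1-e^{-t})}{t\left(e^{t}+1\right)}\cdot\frac{\left(e^{t}+1\right)e^{xt+yt^{2}}}{e^{2t}+1}.
\]
The first factor carries the polylogarithm and is the generating function of the poly-Genocchi numbers $G_{m}^{(k)}$ occurring in \eqref{32}, so I would replace it by $\sum_{m=0}^{\infty}G_{m}^{(k)}\,t^{m}/m!$. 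For the second factor I would use the elementary identity
\[
\frac{\left(e^{t}+1\right)e^{xt+yt^{2}}}{e^{2t}+1}=\frac{1}{2}\left(\frac{2e^{(x+1)t+yt^{2}}}{e^{2t}+1}+\frac{2e^{xt+yt^{2}}}{e^{2t}+1}\right),
\]
and then invoke \eqref{29} to recognise the two terms as $\tfrac12\sum_{p=0}^{\infty}\{({}_{H}\mathcal{T}_{p}(x+1,y))+({}_{H}\mathcal{T}_{p}(x,y))\}\,t^{p}/p!$.

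Having expressed the generating function as a product of these two series, I would form the Cauchy product and compare the coefficients of $t^{n}/n!$ on both sides to read off \eqref{32}. I expect the main obstacle to be bookkeeping rather than conceptual: one must absorb the factor $t$ in the denominator of the first factor correctly so that the normalisation of the poly-Genocchi series matches and no spurious index shift is introduced, and one must be careful that the running index in the symmetric bracket is the inner convolution index $p$ (not a free copy of $n$, as the displayed statement appears to print). Once the two series are normalised consistently, the convolution is routine and comparing coefficients yields the stated identity.
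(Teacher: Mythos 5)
Your decomposition is, up to where the factor $t$ is parked, the same as the paper's: the paper writes the generating function as $\tfrac{1}{2}\bigl(\tfrac{2L_{i_{k}}(1-e^{-t})}{e^{t}+1}\bigr)\cdot\tfrac{2(e^{t}+1)e^{xt+yt^{2}}}{t(e^{2t}+1)}$ and splits $e^{t}+1$ to produce the symmetric bracket, so conceptually you run the identical argument. But the step you defer as ``bookkeeping'' is a genuine gap, and it cannot be repaired so as to give (\ref{32}) as printed. By (\ref{18}) (with $x=0$) the poly-Genocchi series is $\sum_{n}G_{n}^{(k)}\frac{t^{n}}{n!}=\frac{2L_{i_{k}}(1-e^{-t})}{e^{t}+1}$, \emph{without} the $t$ in the denominator; your first factor $\frac{2L_{i_{k}}(1-e^{-t})}{t(e^{t}+1)}$ is instead Hamahata's poly-Euler series $\sum_{m}\mathfrak{E}_{m}^{(k)}\frac{t^{m}}{m!}$. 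Since $L_{i_{k}}(1-e^{-t})$ vanishes to first order at $t=0$ (equivalently $G_{0}^{(k)}=0$), dividing by $t$ is legitimate but necessarily shifts indices:
\begin{equation*}
\frac{2L_{i_{k}}\left( 1-e^{-t}\right) }{t\left( e^{t}+1\right) }
=\sum\limits_{m=0}^{\infty }\frac{G_{m+1}^{\left( k\right) }}{m+1}\frac{t^{m}}{m!}\text{,}
\end{equation*}
so the convolution your argument actually yields is
\begin{equation*}
\left( _{H}\mathcal{T}_{n}^{\left( k\right) }\left( x,y\right) \right) =
\frac{1}{2}\sum\limits_{p=0}^{n}\binom{n}{p}\frac{G_{n-p+1}^{\left( k\right) }}{n-p+1}
\left\{ \left( _{H}\mathcal{T}_{p}\left( x+1,y\right) \right) +\left( _{H}
\mathcal{T}_{p}\left( x,y\right) \right) \right\} \text{,}
\end{equation*}
with coefficients $G_{n-p+1}^{(k)}/(n-p+1)=\mathfrak{E}_{n-p}^{(k)}$, not $G_{n-p}^{(k)}$. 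Your stated hope that the normalisation can be matched with ``no spurious index shift'' is therefore unrealizable under the paper's definitions: the shift is forced by $G_{0}^{(k)}=0$.

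To be fair, you have put your finger on precisely the defect in the published argument. The paper parks the $1/t$ with the tangent factor, writing $\frac{2e^{(x+1)t+yt^{2}}}{t(e^{2t}+1)}$, and then expands it via (\ref{29}) as though the $t$ were absent, silently dropping it; so (\ref{32}) as printed inherits the same error (in addition to the misprint you correctly spotted, namely that the inner index in the braces should be $p$, not $n$). A correct statement must carry either the shifted coefficients $G_{n-p+1}^{(k)}/(n-p+1)$ displayed above or, equivalently, Hamahata's poly-Euler numbers $\mathfrak{E}_{n-p}^{(k)}$. Your proposal as written, which asserts that the first factor equals $\sum_{m}G_{m}^{(k)}t^{m}/m!$ and concludes (\ref{32}) verbatim, reproduces rather than repairs this flaw; had you carried out the absorption of $t$ honestly, you would have arrived at the corrected identity instead of the theorem as stated.
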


\begin{proof}
From (\ref{28}) and (\ref{18})%
\begin{equation*}
\sum\limits_{n=0}^{\infty }\left( _{H}\mathcal{T}_{n}^{\left( k\right)
}\left( x,y\right) \right) \frac{t^{n}}{n!}=\frac{2L_{i_{k}}\left(
1-e^{-t}\right) }{t\left( e^{2t}+1\right) }e^{xt+yt^{2}}
\end{equation*}%
\begin{eqnarray*}
&=&\frac{1}{2}\left( \frac{2L_{i_{k}}\left( 1-e^{-t}\right) }{e^{t}+1}%
\right) \frac{2\left( e^{t}+1\right) e^{xt+yt^{2}}}{t\left( e^{2t}+1\right) }
\\
&=&\frac{1}{2}\frac{2L_{i_{k}}\left( 1-e^{-t}\right) }{e^{t}+1}\left( \frac{%
2e^{\left( x+1\right) t+yt^{2}}}{t\left( e^{2t}+1\right) }+\frac{%
2e^{xt+yt^{2}}}{t\left( e^{2t}+1\right) }\right) \\
&=&\frac{1}{2}\sum\limits_{n=0}^{\infty }G_{n}^{\left( k\right) }\frac{t^{n}%
}{n!}\left\{ \sum\limits_{p=0}^{\infty }\left( _{H}\mathcal{T}_{p}\left(
x+1,y\right) \right) +\left( _{H}\mathcal{T}_{p}\left( x,y\right) \right) 
\frac{t^{p}}{p!}\right\} \text{.}
\end{eqnarray*}%
Comparing the coefficients of both sides, we have (\ref{32}).
\end{proof}

\begin{acknowledgement}
The present investigation was supported, by the \textit{Scientific Research
Project Administration of Akdeniz University}.
\end{acknowledgement}

\end{document}